\newtheorem{thm}{Theorem}[section]
\newtheorem{prop}[thm]{Proposition}
\theoremstyle{definition}
\theoremstyle{remark}
\numberwithin{equation}{section}
\def \B {\mathcal{B}}
\def \F {\mathcal{F}}
\begin{document}

\title[Auslander-Yorke¡¯s type
dichotomy theorems]{Auslander-Yorke¡¯s type
dichotomy theorems for stronger version $r$-sensitivity}

\author[K. Liu]{Kairan Liu}
\address{K. Liu: Department of Mathematics, University of Science and Technology of China,
Hefei, Anhui, 230026, P.R. China}
\email{lkr111@mail.ustc.edu.cn}
\author[X. Zhou]{Xiaomin Zhou}
\address{X. Zhou: Department of Mathematics, Huazhong University of Science and Technology,
Wuhan, Hubei, 430074, P.R. China}
\email{zxm12@mail.ustc.edu.cn}
\subjclass[2010]{Primary 37B05; Secondary 54H20}
\keywords{sensitivity, measurable sensitivity, almost finite-to-one extension}

\date{}

\begin{abstract}
In this paper, for $r\in \mathbb{N}$ with $r\geq 2$ we consider several stronger version $r$-sensitivities and measure-theoretical $r$-sensitivities by analysing subsets of nonnegative integers, for which the $r$-sensitivity occurs. We obtain an Auslander-Yorke's type dichotomy theorem: a minimal topological dynamical system is either thickly $r$-sensitive or an almost $m$ to one extension  of its maximal equicontinuous factor for some $m\in \{1,\cdots, r-1\}$.
\end{abstract}
\maketitle

\section{Introduction}
In the paper, sets of all integers, nonnegative integers and
natural numbers are denoted by $\mathbb{Z},\ \mathbb{Z}_+$ and $\mathbb{N}$ respectively.
Throughout this paper by a {\em topological dynamical system }(TDS for short) we mean a pair $(X, T)$,
where $X$ is a compact metric space with a metric $d$ and $T : X\rightarrow X$ is homeomorphism. For a TDS $(X,T)$ there exist invariant Borel probability measures. Given a $T$-invariant Borel probability measure $\mu$, we denote the induced {\em measurable  dynamical system}
$(X,\mathcal{B}_X,\mu,T)$, where $\mathcal{B}_X $ is the Borel $\sigma$-algbra of $X$. We also let $\mathcal{B}_{X,\mu}^+=\{B \in \mathcal{B}_X | \mu (B)>0 \}\ $.

A TDS $(X, T)$ is called {\em equicontinuous} if $\{T^n :n\ge 0\}$ is equicontinuous at any point of $X$. Each dynamical system admits a maximal equicontinuous factor. In fact, this factor is related to the regionally proximal relation of the system. Let $(X, T)$ be a TDS. {\em The regionally proximal relation} $Q(X,T)$
 of $(X, T)$ is defined as: $(x ,y )\in Q(X,T)$
 if and only if for any $\epsilon>0$
 there exist $x',y'\in X$  and $m\in \mathbb{Z}_+$
 such that $d(x,x')<\epsilon$, $d(y,y')<\epsilon$ and {$d(T^mx',T^m y')<\epsilon$}.
 Observe that $Q(X,T)\subset X\times X$
 is closed and positively invariant (in the sense that if $(x ,y)\in Q(X,T)$
  then $T\times T(x ,y)\in Q(X,T)$), which induces the maximal equicontinuous factor $(X_{eq} ,S)$
 of $(X, T)$. And if $(X, T)$ is minimal, i.e., $\text{orb}(x,T):=\{T^n x:n\in \mathbb{Z}\}$
 is dense in $X$ for any $x\in X$, then $Q(X,T)$ is in fact an equivalence relation
 by \cite{Au,BHM2000,EG1960,Veech1968} and \cite[Proposition A.4]{HY}. Denote by $\pi_{eq}:(X,T)\rightarrow(X_{eq},S)$
the corresponding factor map.

In \cite{R} Ruelle introduced the notion of sensitivity (sensitive dependence on initial conditions), which is the opposite to the notion of equicontinuity. According to the works by Guckenheimer \cite{Gu}, Auslander and Yorke \cite{AY},  a TDS $(X,T)$ is called {\em sensitive} if there exists $\delta >0$ such that
 in any non-empty open subset $U$ of $X$ there are $x,y\in U$
 and  $n\in \mathbb{N}$ with $d(T^n x,T^n y)>\delta$. Auslander and Yorke \cite{AY} proved the following dichotomy theorem:
a minimal system is either equicontinuous or sensitive (see also \cite{GW}).

The notion of sensitivity was generalized by measuring the set of nonnegative integers for which the sensitivity occurs \cite{HKKZ,HKZ,L,LY, M,YY}. For a subset $A$ of natural numbers $\mathbb{N}$,  we say $A$ is
\begin{enumerate}
\item {\em thick} if for any $ k \in \mathbb{N}$ we can find some $n \in \mathbb{N}$ such that $ \{ n, n+1,\cdots,n+k \}  \subset A$;
\item {\em syndetic} if there exists some $k \in \mathbb{N}$ such that for every $n \in \mathbb{N}$ we have $ \{ n,n+1,\cdots,n+k \} \cap A \neq \emptyset $;
\item {\em thickly syndetic} if $\{ n\in \mathbb{Z_+}: \{ n,n+1,\cdots,n+k \} \subset A \} $ is syndetic for each $k\in \mathbb{N}$.
\end{enumerate}
Thick sensitivity, thickly syndetical sensitivity and multi sensitivity were introduced and investigated in \cite{M,LLW}.
Huang, Kolyada and Zhang \cite[Theorem 3.1]{HKZ} showed that
a minimal system is either thickly sensitive or an almost one-to-one extension of its maximal equicontinuous factor.
Ye and Yu introduced block (resp. strongly) thickly (resp. IP) sensitivity and proved several Auslander-Yorke's type dichotomy theorems in \cite{YY}.

There are also several authors to study the measure-theoretic sensitivity
 \cite{ABC,ADFKLS,CJ,G,HMS,HLY,JKL,L,LY,WW2009,Y}. Huang, Lu and Ye \cite{HLY} introduced a notion called
sensitivity for an invariant Borel probability measure $\mu$ and proved that a minimal system is either equicontinuous or sensitive for $\mu$.
Wu and Wang \cite{WW2009} introduced $\F$-$\mu$-pairwise sensitivity and $\F$-$\mu$-sensitivity, where $\F$ is a family, and
investigated  when the two notions coincide. Yu \cite{Y} discussed $\F$-sensitivity for $\mu$ and proved the equivalence between some different measure sensitivity for a minimal TDS.

Recently Zou considered stronger version $r$-sensitivities and also discussed equivalence between some strong version sensitivities for transitive or minimal TDS in \cite{Z}, where $r$-sensitivity was firstly  introduced by Xiong in \cite{X} which is a stronger version sensitivity (see also \cite{SYZ,YZ}).

Inspired by the previous works, our aim in this paper is to investigate stronger version $r$-sensitivities, measure-theoretical $r$-sensitivities,  and give an  Auslander-Yorke's type dichotomy theorem for stronger version $r$-sensitivity. More precisely, for a TDS $(X,T)$ with a $T$-invariant Borel probability measure $\mu$, $\delta>0$ and $r\in \mathbb{N}$ with $r\geq 2$, and a non-empty subset $U$ of $X$, we put
$$ N_T(U,\delta;r):=\{n\in \mathbb{N}:\exists x_1,x_2,\cdots,x_r\in U \text{ such that} \min \limits_{1\leq i\neq j \leq r}d(T^nx_i,T^nx_j)>\delta \} .$$
It is easy to see that $(X,T)$ is sensitive if and only if there exists $\delta>0$ such that $N_T (U, \delta;2)$
is infinite for every non-empty open subset $U$ of $X$. Moreover we say $(X,T)$ is
 \begin{enumerate}
 \item {\em thickly $r$-sensitive  (resp. for $\mu ) $} if there exists $\delta>0$ such that $N_T(U,\delta;r)$ is thick for any non-empty open subset $U$ of $X$ (resp. for any set $U\in \mathcal{B}_{X,\mu}^+$);
\item  {\em thickly syndetically $r$-sensitive (resp. for $\mu$)} if there exists $\delta>0$ such that $N_T(U,\delta;r)$ is thickly syndetic for any non-empty open subset $U$ of $X$ (resp. for any set $U\in \mathcal{B}_{X,\mu}^+$);
\item  {\em multi-$r$-sensitive (resp. for $\mu$}) if there exists $\delta>0$ such that $$\bigcap \limits_{i=1}^{k} N_T(U_i,\delta;r)\neq \emptyset $$ for any $k\in \mathbb{N}$ and any non-empty open subsets $U_1,U_2,\cdots, U_k$ of $X$ (resp. for any finite collection $U_i\in \mathcal{B}_{X,\mu}^+,\ i=1,\cdots,k$).
\end{enumerate}

The main results of this paper are follows:
\begin{thm} \label{thm-1} Let $(X,T)$ be a minimal TDS,  $r\in \mathbb{N}$ with $r\geq 2$, and $ \pi_{eq} :(X,T)\rightarrow (X_{eq},S)$ be the maximal equicontinuous  factor of $(X,T)$. Then $(X,T)$ is either thickly $r$-sensitive or an almost $m$ to one extension  of its maximal equicontinuous factor (i.e., there exists a dense $G_\delta $ subset $A$ of $X_{eq}$ such that $ \# \pi_{eq}^{-1}(y)=m $ for any $ y\in A $) for some $m\in \{1,2,\cdots, r-1\}$.
 \end{thm}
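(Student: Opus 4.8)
The plan is to run a dichotomy on the minimal fibre cardinality $m_0:=\min_{y\in X_{eq}}\#\pi_{eq}^{-1}(y)$, recalling that since $(X,T)$ is minimal the fibres of $\pi_{eq}$ are exactly the classes of the equivalence relation $Q(X,T)$. If $m_0\le r-1$ (in particular $m_0<\infty$), I would invoke the standard structural fact for factor maps of minimal systems that $\{y:\#\pi_{eq}^{-1}(y)=m_0\}$ is a dense $G_\delta$ subset of $X_{eq}$; this exhibits $(X,T)$ as an almost $m_0$-to-one extension with $m_0\in\{1,\dots,r-1\}$, i.e.\ the second alternative. The whole content is therefore the case $m_0\ge r$, where I must produce thick $r$-sensitivity.

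First I would reduce thickness to multi-sensitivity. Since $T$ is a homeomorphism, for every nonempty open $U$ and every $i\ge 0$ one has the elementary identity $N_T(T^iU,\delta;r)=N_T(U,\delta;r)-i$. Hence a block $\{n,n+1,\dots,n+k\}$ lies in $N_T(U,\delta;r)$ exactly when $n\in\bigcap_{i=0}^k N_T(T^iU,\delta;r)$, so $N_T(U,\delta;r)$ is thick if and only if $\bigcap_{i=0}^k N_T(T^iU,\delta;r)\ne\emptyset$ for all $k$. Consequently it suffices to prove that $(X,T)$ is multi-$r$-sensitive: once a single $\delta$ works for all finite families of open sets, applying it to the family $T^0U,\dots,T^kU$ yields thick $r$-sensitivity with the same $\delta$.

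Next I would extract a uniform separation scale. For $\delta>0$ set $W_\delta:=\{y\in X_{eq}:\exists\,x_1,\dots,x_r\in\pi_{eq}^{-1}(y),\ \min_{l\ne l'}d(x_l,x_{l'})\ge\delta\}$. Each $W_\delta$ is closed (pass to convergent subsequences of the witnesses and use continuity of $\pi_{eq}$), the sets $W_\delta$ increase as $\delta\downarrow0$, and the assumption $m_0\ge r$ gives $\bigcup_{\delta>0}W_\delta=X_{eq}$. By the Baire category theorem some $W_\delta$ has nonempty interior, so there are $\delta>0$ and a nonempty open $O\subseteq X_{eq}$ such that every fibre over $O$ carries $r$ points pairwise at distance $\ge\delta$; I would then fix the sensitivity constant as a fixed fraction of this $\delta$.

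The hard part is producing simultaneous separation. Given nonempty open $U_1,\dots,U_k\subseteq X$, I must locate one instant $n$ at which each $U_i$ contains an $r$-tuple that is $\delta$-separated at time $n$. Here I would use that $(X_{eq},S)$ is minimal and equicontinuous, hence isometric for a suitable invariant metric, so that horizontal distances are preserved by $S$ and the return times of any point to $O$ are syndetic; that $\pi_{eq}$ is semi-open, being a factor map between minimal systems, so each $\operatorname{int}\pi_{eq}(U_i)$ is a nonempty open subset of $X_{eq}$; and minimality of $(X,T)$ to transport, at a common time, a separated fibre configuration lying over $O$ into each $U_i$. The essential difficulty is exactly this synchronization: the $r$-separation is a vertical, fibrewise phenomenon living inside the $Q(X,T)$-classes, whereas the $U_i$ meet many distinct fibres, so one must combine the syndeticity of $O$-returns in the isometric factor with the recurrence of the $r$-fold product $T\times\cdots\times T$ along the fibre-product relation in order to align all $k$ sets at a single $n$. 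This is the step where the hypothesis $m_0\ge r$ is indispensable, and where the argument parallels and refines the case $r=2$ of Huang, Kolyada and Zhang \cite{HKZ}.
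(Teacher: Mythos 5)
Your overall architecture is reasonable and its first three steps are sound: the dichotomy on $m_0=\min_{y\in X_{eq}}\#\pi_{eq}^{-1}(y)$ with the ``almost $m_0$-to-one'' alternative is exactly the content of Zou's result that the paper quotes as Proposition \ref{pro-0}; the identity $N_T(T^iU,\delta;r)=N_T(U,\delta;r)-i$ and the resulting reduction of thickness to multi-$r$-sensitivity is correct (the paper uses the same trick in the measure-theoretic setting, in the proof of Proposition \ref{pro-3}); and the Baire-category extraction of a $\delta$ and an open $O\subseteq X_{eq}$ over which every fibre carries a $\delta$-separated $r$-tuple is fine (though weaker than what the paper gets: its Proposition \ref{pro-1}, via upper semicontinuity of $\phi_r$ at a continuity point plus minimality, yields the \emph{global} bound $\eta_r>0$, i.e.\ separation in every fibre, which removes the need to track returns to $O$ at all).

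However, the fourth step --- which you yourself identify as ``the hard part'' --- is a genuine gap, not a proof. Producing a \emph{single} time $n$ at which every $U_i$ (equivalently, every $T^iU$, $0\le i\le L-1$) contains a $\delta$-separated $r$-tuple is precisely the statement to be proved, and the tools you list do not assemble into an argument: minimality of $(X,T)$ gives no recurrence for the $r$-fold product $T\times\cdots\times T$ restricted to the fibre-product relation, which is in general neither minimal nor transitive, so ``combining syndeticity of $O$-returns with recurrence of the $r$-fold product'' has no content as stated. The paper's mechanism for this synchronization is entirely different and is the actual crux: fixing $y\in X_{eq}$ and pulling the separated tuples over $S^ky$ back by $T^{-k}$, one obtains $Lr$ points $z_i^k$ all lying in the \emph{single} fibre $\pi_{eq}^{-1}(y)$, hence pairwise regionally proximal; Proposition \ref{xyz} (Huang--Lu--Ye) then upgrades this to $(z_i^k)\in Q_{Lr}(X,T)\setminus\Delta_{Lr}(X)=S^\mu_{Lr}(X,T)$, and the defining property of a sensitive $Lr$-tuple for $\mu$ furnishes one time $m$ with $T^mA$ meeting all $Lr$ neighborhoods simultaneously, giving the block $\{m,\dots,m+L-1\}\subset N_T(A,\delta;r)$; full support of invariant measures on a minimal system then yields the topological statement. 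Without this sensitive-tuple input (or an equivalent substitute, e.g.\ an enveloping-semigroup or HKZ-style argument adapted to $r$-tuples), your proof of the sensitive alternative is incomplete.
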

 \begin{thm} \label{thm-2} Let $(X,T)$ be a minimal TDS with a $T$-invariant Borel probability measure $\mu$,  $r\in \mathbb{N}$ with $r\geq 2$, and $ \pi_{eq} :(X,T)\rightarrow (X_{eq},S)$ be the maximal equicontinuous  factor of $(X,T)$. Then the following statements are equivalent:
\begin{enumerate}
  \item[1).] $(X,T)$ is multi-$r$-sensitive for $\mu$.
  \item[2).] $(X,T)$ is thickly $r$-sensitive for $\mu$.
  \item[3).] $(X,T)$ is thickly syndetically  $r$-sensitive for $\mu$.
  \item[4).] For any $ m\in \{1,2,\cdots,r-1\} $, $\pi_{eq}$ is not almost $m$ to one  extension.
\end{enumerate}
\end{thm}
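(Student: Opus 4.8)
The plan is to prove the four statements equivalent by establishing the cycle $(1)\Rightarrow(2)\Rightarrow(4)\Rightarrow(3)\Rightarrow(1)$, where three of the arrows are soft consequences of minimality, the invariance of $\mu$, and standard Furstenberg-family bookkeeping, while $(4)\Rightarrow(3)$ carries the real weight. Throughout I use two structural facts: first, since $(X,T)$ is minimal, $\operatorname{supp}\mu=X$, so every nonempty open set lies in $\mathcal{B}_{X,\mu}^{+}$; second, the maximal equicontinuous factor $(X_{eq},S)$ is a minimal rotation, so $S$ may be taken to be an isometry and the return-time set $\{n:S^{n}y\in B(y,\eta)\}$ is syndetic for every $y$ and every $\eta>0$. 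I also record the elementary shift relation
$$N_T(T^{-j}U,\delta;r)\subseteq N_T(U,\delta;r)+j,$$
valid for all $j$, obtained by pushing a separating $r$-tuple for $T^{-j}U$ forward by $T^{j}$.

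For $(1)\Rightarrow(2)$: fix $U\in\mathcal{B}_{X,\mu}^{+}$ and $\ell\in\mathbb{N}$. Since $\mu$ is $T$-invariant, each $T^{-j}U$ for $0\le j\le\ell$ again lies in $\mathcal{B}_{X,\mu}^{+}$, so multi-$r$-sensitivity supplies $n\in\bigcap_{j=0}^{\ell}N_T(T^{-j}U,\delta;r)$; the shift relation then forces $\{n-\ell,\dots,n\}\subseteq N_T(U,\delta;r)$, giving thickness with the same $\delta$. For $(3)\Rightarrow(1)$ I use the standard fact that the thickly syndetic sets form a filter (the dual of the partition-regular family of piecewise syndetic sets); hence for the $\delta$ provided by (3) and any finite collection $U_{1},\dots,U_{k}\in\mathcal{B}_{X,\mu}^{+}$, the intersection $\bigcap_{i=1}^{k}N_T(U_i,\delta;r)$ is again thickly syndetic, in particular nonempty, which is exactly multi-$r$-sensitivity for $\mu$.

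The arrow $(2)\Rightarrow(4)$ I prove in contrapositive form: if $\pi_{eq}$ is almost $m$-to-one for some $m\le r-1$, then $(X,T)$ is not thickly $r$-sensitive for $\mu$. The function $y\mapsto\#\pi_{eq}^{-1}(y)$ is upper semicontinuous and $S$-invariant, so its minimum value $m$ is attained on an open dense set $G$, over which $\pi_{eq}$ splits into $m$ continuous sheets. Given any $\delta>0$, choose $y_{0}\in G$ and a ball $V_{0}=B(y_{0},2\eta)\subseteq G$ so small that $\pi_{eq}^{-1}(V_{0})$ is a disjoint union of $m$ sheets each of diameter $<\delta$, and let $U$ be the sheet over $B(y_0,\eta)$, an open (hence positive-measure) set. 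For a suitable $\eta'$, whenever $S^{n}y_{0}\in B(y_0,\eta')$ the isometry property sends $\pi_{eq}(T^{n}x)$ into $V_0$ for every $x\in U$, so the $r$ images $T^{n}x_{1},\dots,T^{n}x_{r}$ fall into the $m<r$ small sheets over $V_0$; by pigeonhole two of them share a sheet and lie $<\delta$ apart, whence $n\notin N_T(U,\delta;r)$. Thus $N_T(U,\delta;r)$ is disjoint from a syndetic return-time set and so is not thick. As $\delta>0$ was arbitrary, $(X,T)$ is not thickly $r$-sensitive for $\mu$.

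The crux is $(4)\Rightarrow(3)$. Condition (4) means exactly that the minimal fiber cardinality is at least $r$, so $\#\pi_{eq}^{-1}(y)\ge r$ for every $y$. The strategy is a \emph{transport} argument dual to the previous paragraph: fix a fiber $\pi_{eq}^{-1}(y_{0})$ with $r$ points that are pairwise $3\delta$-separated, set $O_i=B(p_i,\delta)$, and reduce ``$n\in N_T(U,\delta;r)$'' to the demand that $T^{n}U$ meet $O_1,\dots,O_r$ simultaneously. By minimality each single set $\{n:T^{n}U\cap O_i\neq\emptyset\}$ is syndetic, and equicontinuity of the base forces every such $n$ to satisfy that $S^{n}\pi_{eq}(U)$ lies near $y_0$, a single syndetic constraint; the task is to upgrade this to \emph{simultaneous} hitting of all the $O_i$ along a \emph{thickly syndetic} set of times. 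I expect this to be the main obstacle, for two linked reasons: the fiberwise separation $\operatorname{sep}_r(y)$ is only upper semicontinuous and need not be bounded below along an orbit, so the scale $\delta$ cannot be read off topologically; and a positive-measure $U$ must be shown to genuinely spread across the fiber rather than collapse onto a thin part of it. The natural remedy for both is to pass to the relatively independent self-product $\int_{X_{eq}}(\mu_{y}\times\cdots\times\mu_{y})\,d\lambda(y)$ over $X_{eq}$: one first upgrades $\#\pi_{eq}^{-1}(y)\ge r$ to $\#\operatorname{supp}\mu_{y}\ge r$ on a positive-measure set of fibers, thereby fixing a uniform $\delta$; then Poincaré recurrence in the product system, together with the filter property of thickly syndetic sets and the syndetic return times of the equicontinuous base, yields the simultaneous, block-producing spreading needed for thick syndeticity. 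Assembling these gives $(4)\Rightarrow(3)$ and closes the cycle.
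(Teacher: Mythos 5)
Your soft arrows are fine and essentially match the paper: $(1)\Rightarrow(2)$ via invariance of $\mu$ and the shift relation, and $(3)\Rightarrow(1)$ via the filter property of thickly syndetic sets, are exactly Proposition 3.2 of the paper. Your $(2)\Rightarrow(4)$ also survives, but only after you repair an overstatement: the set where the minimal fiber cardinality $m$ is attained is in general only a dense $G_\delta$, not open, and $\pi_{eq}$ need not split into $m$ continuous sheets over any open set (nearby fibers may carry several points inside a single tube). What is true, and all the pigeonhole needs, is upper semicontinuity of $y\mapsto \pi_{eq}^{-1}(y)$: for $y_0$ with $\#\pi_{eq}^{-1}(y_0)=m$ there is a neighborhood $V_0$ with $\pi_{eq}^{-1}(V_0)$ contained in $m$ disjoint sets of diameter $<\delta$; this is precisely the paper's argument in Proposition 2.4 with $W$ the union of $\delta/3$-balls.

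The genuine gap is at your crux $(4)\Rightarrow(3)$, in two places. First, the proposed upgrade from $\#\pi_{eq}^{-1}(y)\ge r$ to $\#\operatorname{supp}\mu_y\ge r$ on a positive-measure set of fibers is unsupported and is false in general: the disintegration of $\mu$ over the maximal equicontinuous factor can concentrate on a thin part of each fiber (nothing in condition 4) constrains $\mu_y$), so Poincar\'e recurrence in the relatively independent self-joining cannot by itself force a positive-measure set $A$ to spread across the topological fiber. The paper's substitute for this step is the deep Huang--Lu--Ye identification $S_n^\mu(X,T)=Q_n(X,T)\setminus\Delta_n(X)$ for minimal systems: since all the points $z_i^k=T^{-k}x_i^k$ sit in one fiber $\pi_{eq}^{-1}(y)$, the tuple lies in $Q_{Lr}(X,T)\setminus\Delta_{Lr}(X)$ and hence is a sensitive $Lr$-tuple for $\mu$, which is exactly the simultaneous hitting $T^mA\cap U_i^k\neq\emptyset$ you need, for an arbitrary $A\in\mathcal{B}_{X,\mu}^{+}$. (Note also that your worry about the scale is resolved topologically, not measure-theoretically: Proposition 2.2 of the paper shows, via upper semicontinuity of $\phi_r$ plus minimality, that 4) already forces the uniform separation $\eta_r>0$, so a uniform $\delta$ is available on every fiber.) Second, even granting fiberwise support, your one-line appeal to ``filter property plus syndetic base return times'' only produces long blocks, i.e.\ thickness, not thick syndeticity; the paper does not prove $(4)\Rightarrow(3)$ directly but proves $(4)\Rightarrow(2)$ for $\mu$ as above and then upgrades $(2)\Rightarrow(3)$ by a separate compactness argument (continuity of $diam_r$ on $(2^X,H_d)$, Proposition 3.1, fed into a Hausdorff-limit construction following Yu), and your sketch contains no substitute for that mechanism.
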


The paper is organized as follows. In Section 2, we firstly recall some definitions and some
related lemmas, and then we prove Theorem \ref{thm-1}. In Section 3, we study some related measure-theoretic $r$-sensitivity and prove Theorem \ref{thm-2}.

\section{Proof of Theorem \ref{thm-1}}

In this section we are to prove Theorem \ref{thm-1}. For that we need some notation and Propositions.
Let $(X,T)$ be a minimal TDS, and $ \pi_{eq} :(X,T)\rightarrow (X_{eq},S) $ be the maximal equicontinuous factor of $(X,T)$. For $y\in X_{eq}$ we define:
$$r_{eq}(y):=\sup\{k\in \mathbb{N}:\exists x_1,...,x_k\in \pi_{eq}^{-1}(y) \text{ s.t. } (x_i,x_j) \text{ is  distal},\ \forall 1\leq i\neq j\leq k \},\ $$
where a pair $(x,y)\in X\times X $ is called {\em distal}, if $\inf_{n\in \mathbb{Z}}d(T^nx,T^ny) >0 $.
According to Lemma 2.2 in \cite{Z}, $r_{eq}(y)$ is a constant function on $X_{eq}$, which is denoted as $r_{eq}(X,T)$.
\begin{prop}\label{pro-0}\cite[Proposition 2.6]{Z}
Let $(X,T)$ be a minimal TDS and $\pi_{eq} :(X,T)\rightarrow (X_{eq},S)$ be the  maximal  equicontinuous factor  of $(X,T)$. If $ \# \pi^{-1}_{eq}(y_0)<\infty $ for some $y_0 \in X_{eq}$, then
$$r_{eq}(X,T)=\min_{y\in X_{eq}}\#\pi_{eq}^{-1}(y)\in [1,+\infty)$$
and  $\pi_{\text{eq}}$ is  almost $r_{eq}(X,T)$ to one extension, that is,
$$\#\pi_{eq}^{-1}(y) = r_{eq}(X,T)$$
holds for $y$ in a dense
$G_\delta$ subset of $X_{eq}$.
\end{prop}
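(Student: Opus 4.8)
The plan is to study the fiber-cardinality function $N(y):=\#\pi_{eq}^{-1}(y)\in[1,+\infty]$ and to show that it equals $r:=r_{eq}(X,T)$ on a dense $G_\delta$ set, with $r$ as its minimum. First I would record the trivial inequality: if $x_1,\dots,x_k\in\pi_{eq}^{-1}(y)$ are pairwise distal they are in particular distinct, so $r_{eq}(y)\le N(y)$; since $r_{eq}(y)\equiv r$ this gives $r\le N(y)$ for every $y$, and in particular $r\le N_0:=\min_{y}N(y)\le\#\pi_{eq}^{-1}(y_0)<\infty$, together with $r\ge 1$. Thus $r,N_0\in[1,+\infty)$, and it remains to prove the two assertions: (i) $N\equiv N_0$ on a dense $G_\delta$ subset, and (ii) $N_0=r$.

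For (i) I would use semicontinuity. For $k\in\mathbb{N}$ and $\delta>0$ set
$$F_{k,\delta}:=\{y\in X_{eq}:\ \exists\,x_1,\dots,x_k\in\pi_{eq}^{-1}(y)\ \text{with}\ d(x_i,x_j)\ge\delta\ \forall\,i\ne j\}.$$
A limiting argument (passing to convergent subsequences of the witnessing tuples and using that $\pi_{eq}$ is continuous and closed) shows each $F_{k,\delta}$ is closed, whence $\{N\ge k\}=\bigcup_{m}F_{k,1/m}$ is $F_\sigma$ and $\{N\le k\}$ is $G_\delta$. Since $T$ is a homeomorphism commuting with $\pi_{eq}$, the fibers of $y$ and $Sy$ are in bijection, so $N$ is $S$-invariant and each $\{N\le k\}$ is $S$-invariant. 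Hence $\{N=N_0\}=\{N\le N_0\}$ is a nonempty $S$-invariant $G_\delta$ set; as $(X_{eq},S)$ is minimal its closure is all of $X_{eq}$, so it is a dense $G_\delta$, i.e. $\pi_{eq}$ is almost $N_0$-to-one. This reduces everything to (ii), and since $N\ge r$ always, (ii) is equivalent to the single statement that $\{N\le r\}\ne\emptyset$, i.e. that one fiber has exactly $r$ points.

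The crux is (ii), which I would attack through the enveloping semigroup $E(X,T)$. Fix a minimal left ideal $M\subseteq E(X,T)$ and a minimal idempotent $u\in M$. Because $(X_{eq},S)$ is equicontinuous its enveloping semigroup is a group of isometries, so its only idempotent is the identity; thus $u$ acts as the identity on $X_{eq}$ and therefore maps every fiber $\pi_{eq}^{-1}(y)$ into itself. A short computation in the ideal $M$ shows that two $u$-fixed points of a fiber that are proximal must coincide, so the fixed-point set $F_u(y):=u\,\pi_{eq}^{-1}(y)=\{x\in\pi_{eq}^{-1}(y):ux=x\}$ consists of pairwise distal points; replacing a maximal distal tuple $x_1,\dots,x_r$ by $ux_1,\dots,ux_r$ shows moreover that $\#F_u(y)=r$ for every $y$.

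The remaining obstacle — and the only genuinely hard point — is that an element of $E(X,T)$ maps a fiber only \emph{into} a target fiber, so collapsing a proximal pair produces few points in some fiber without bounding that fiber's cardinality from above; the preimage map $\Phi:y\mapsto\pi_{eq}^{-1}(y)$ is merely upper semicontinuous into the hyperspace of $X$. I would overcome this by working at a continuity point. By Fort's theorem the upper semicontinuous map $\Phi$ is continuous on a residual set $C\subseteq X_{eq}$, and by (i) the set $\{N<\infty\}$ is also residual, so I may choose $\tilde y\in C$ with $\pi_{eq}^{-1}(\tilde y)$ finite. Writing $u=\lim_{\alpha}T^{n_\alpha}$ in $E(X,T)$, and using that $u$ fixes the base, we get $S^{n_\alpha}\tilde y\to\tilde y$ while $T^{n_\alpha}\,\pi_{eq}^{-1}(\tilde y)\to u\,\pi_{eq}^{-1}(\tilde y)=F_u(\tilde y)$ in Hausdorff distance (here finiteness of the fiber is essential, and is exactly where the hypothesis enters). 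Continuity of $\Phi$ at $\tilde y$ then forces $\pi_{eq}^{-1}(\tilde y)=F_u(\tilde y)$, so $N(\tilde y)=r$. This yields $\{N\le r\}\ne\emptyset$, hence $N_0=r$ and, by (i), $\#\pi_{eq}^{-1}(y)=r$ on a dense $G_\delta$ subset of $X_{eq}$, as claimed.
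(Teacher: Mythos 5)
The paper contains no proof of this proposition to compare against: it is imported verbatim from \cite[Proposition 2.6]{Z}, so your argument can only be judged on its own merits, and on those merits it is correct and essentially complete. Part (i) --- the $S$-invariance of $N(y)=\#\pi_{eq}^{-1}(y)$, the Borel bookkeeping via the sets $F_{k,\delta}$ (whose closedness needs only compactness of $X$ and continuity of $\pi_{eq}$, not closedness of the map), and the density of the nonempty invariant $G_\delta$ set $\{N=N_0\}$ by minimality --- is routine and sound. Part (ii) is the real content, and your chain of steps checks out: the image of a minimal idempotent $u$ in $E(X_{eq},S)$ is an idempotent in a group, hence the identity, so $u$ preserves fibers; $u$-fixed proximal points coincide (your ``short computation'' is Auslander's standard lemma: choose a minimal left ideal $L$ inside the closed left ideal $\{p: px=py\}$ and an idempotent $v\in L$ equivalent to $u$, so that $vu=u$ gives $x=vx=vy=y$ --- you should spell this out or cite it, and you should fix $E(X,T)$ for the $\mathbb{Z}$-action so that ``not proximal'' agrees with the paper's definition of distal via $\inf_{n\in\mathbb{Z}}$); together with the constancy of $r_{eq}(\cdot)$ quoted in the paper from \cite{Z}, this makes $u\,\pi_{eq}^{-1}(y)$ a pairwise distal set of cardinality exactly $r=r_{eq}(X,T)$ in every fiber, whence $N\ge r$ everywhere. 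Your finishing move is also correct and is precisely where the finiteness hypothesis enters: at a point $\tilde y$ that is simultaneously a Fort continuity point of the upper semicontinuous fiber map and has finite fiber, pointwise convergence $T^{n_\alpha}w\to uw$ upgrades to Hausdorff convergence $\pi_{eq}^{-1}(S^{n_\alpha}\tilde y)=T^{n_\alpha}\pi_{eq}^{-1}(\tilde y)\to u\,\pi_{eq}^{-1}(\tilde y)$, and uniqueness of Hausdorff limits forces $\pi_{eq}^{-1}(\tilde y)=u\,\pi_{eq}^{-1}(\tilde y)$, so $N_0=r$. This is the classical Veech--Auslander route to almost-finite-to-one statements (minimal idempotents plus continuity points of the fiber map), which is in spirit the same machinery underlying \cite{Z}; what your write-up buys is a self-contained proof, modulo the standard ideal-theoretic facts from Auslander's book, in place of the paper's external citation.
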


\begin{prop}\label{pro-1} Let $(X,T)$ be a minimal TDS, $r\in \mathbb{N}$ with $r\geq 2 $, and $\pi_{eq} :(X,T)\rightarrow (X_{eq},S)$ be the  maximal  equicontinuous factor  of $(X,T)$.
If we put
$$ \phi_r(y):=\sup \{\min \limits_{1\leq i< j\leq r} d(x_i,x_j) : x_{1},x_{2},\cdots,x_{r} \in \pi^{-1}(y)\}$$
for $ y\in X_{eq}$ and let $$\eta_{r}=\inf_{y\in X_{eq}} \phi_r(y),$$
then $\eta_{r}=0$ if and only if  there exists $ m\in\{1,2,\cdots,r-1\}$ such that $ \pi_{eq} $ is almost $m$ to one extension.
\end{prop}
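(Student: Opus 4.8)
The plan is to prove the two implications separately, the engine in both cases being the upper semicontinuity of $\phi_r$ together with the constancy of $r_{eq}$ and Proposition \ref{pro-0}. First I would record two elementary facts. For fixed $y$ one has $\phi_r(y)>0$ if and only if $\#\pi_{eq}^{-1}(y)\geq r$: $r$ distinct points of a fibre are pairwise at positive distance, while any choice of $r$ points from a fibre of fewer than $r$ points must repeat, so that the minimal gap, and hence the supremum, is $0$. Secondly, $\phi_r$ is upper semicontinuous: if $y_n\to y$ and $x_1^{(n)},\dots,x_r^{(n)}\in\pi_{eq}^{-1}(y_n)$ nearly realise $\phi_r(y_n)$, then along a subsequence $x_i^{(n)}\to x_i$, and $x_i\in\pi_{eq}^{-1}(y)$ by continuity of $\pi_{eq}$; continuity of $d$ gives $\phi_r(y)\geq\limsup_n\phi_r(y_n)$. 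In particular $\{y:\phi_r(y)\geq c\}$ is closed for every $c$.

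The easy implication is $(\Leftarrow)$. Suppose $\pi_{eq}$ is an almost $m$ to one extension for some $m\in\{1,\dots,r-1\}$, with associated dense $G_\delta$ set $A$. For $y\in A$ we have $\#\pi_{eq}^{-1}(y)=m<r$, hence $\phi_r(y)=0$ by the first fact, and therefore $\eta_r=\inf_{y\in X_{eq}}\phi_r(y)=0$.

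For the converse I would argue contrapositively: assuming $\pi_{eq}$ is not an almost $m$ to one extension for any $m\in\{1,\dots,r-1\}$, I show $\eta_r>0$. The core of the argument is to exhibit a single fibre $\pi_{eq}^{-1}(y_0)$ containing $r$ pairwise distal points $x_1,\dots,x_r$. Granting this, put $c=\min_{1\leq i<j\leq r}\inf_{n\in\Z}d(T^nx_i,T^nx_j)>0$. For every $n$ the points $T^nx_1,\dots,T^nx_r$ lie in $\pi_{eq}^{-1}(S^ny_0)$ and are pairwise $c$-separated, whence $\phi_r(S^ny_0)\geq c$. Since $(X_{eq},S)$ is minimal the orbit $\{S^ny_0:n\in\Z\}$ is dense, and upper semicontinuity then propagates the bound: for any $y$, choosing $S^{n_k}y_0\to y$ gives $\phi_r(y)\geq\limsup_k\phi_r(S^{n_k}y_0)\geq c$. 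Thus $\phi_r\geq c$ on all of $X_{eq}$ and $\eta_r\geq c>0$. Note that distality is exactly what makes this orbit transport work, since $T$ distorts distances and a bound obtained at $S^ny_0$ could not otherwise be pulled back to $y_0$.

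It remains to produce the $r$ pairwise distal points, and this is the step I expect to be the main obstacle. By the constancy of $r_{eq}$ (\cite{Z}, Lemma 2.2) it suffices to show $r_{eq}(X,T)\geq r$, for then the defining family of pairwise distal points does the job. If some fibre is finite, Proposition \ref{pro-0} says $\pi_{eq}$ is almost $r_{eq}(X,T)$ to one, so the standing hypothesis forces $r_{eq}(X,T)\geq r$ and we are done. The delicate case is when every fibre is infinite, where Proposition \ref{pro-0} does not apply directly; one must still guarantee $r_{eq}(X,T)\geq r$, in fact $r_{eq}(X,T)=\infty$. My approach here is the proximal-cell decomposition: fixing a maximal pairwise distal family $x_1,\dots,x_k$ in a fibre (with $k=r_{eq}(X,T)$), every other point of the fibre is proximal to some $x_i$, since a point distal from all of them would enlarge the family; hence the fibre is the union of the $k$ proximal cells of the $x_i$. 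The crux is then to show that finiteness of $k$ would force the fibre to be finite, contradicting the assumption that all fibres are infinite; establishing that the relevant proximal cells inside a fibre are trivial is precisely where the structure theory behind \cite{Z} and Proposition \ref{pro-0} must be brought to bear. Once $r_{eq}(X,T)\geq r$ is secured in all cases, the contrapositive is complete, and reading it back yields that $\eta_r=0$ implies $\pi_{eq}$ is almost $m$ to one with $m=r_{eq}(X,T)\in\{1,\dots,r-1\}$.
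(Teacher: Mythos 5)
Your easy direction, your two preliminary facts (the equivalence $\phi_r(y)>0\Leftrightarrow\#\pi_{eq}^{-1}(y)\geq r$ and the upper semicontinuity of $\phi_r$), and your orbit-transport mechanism are all correct. But the step you yourself flag as the crux is a genuine gap, and worse, the lemma you propose to fill it --- that finiteness of $r_{eq}(X,T)$ forces some fibre to be finite --- is false in general. A proximal extension of an equicontinuous minimal system that is not almost one-to-one gives $r_{eq}(X,T)=1$ (all pairs in every fibre are proximal, so no distal pair exists), while every fibre is infinite (a finite fibre would, via Proposition \ref{pro-0}, make the extension almost one-to-one). In such a system your strategy cannot even start for $r=2$: there are no $r$ pairwise distal points in any fibre, yet the proposition still asserts $\eta_r>0$, since the extension is not almost $m$ to one for any finite $m$. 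So the route through distal $r$-tuples is not merely hard to complete in the all-fibres-infinite case; it is unavailable, because the hypothesis ``not almost $m$ to one for $m\leq r-1$'' is then automatic while $r_{eq}(X,T)$ may be smaller than $r$.

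The paper's proof shows how to avoid distality entirely, and the contrast is instructive. You try to propagate \emph{largeness} of $\phi_r$ from one fibre to all fibres along the orbit, which genuinely requires a uniform lower bound along the whole orbit, i.e.\ distality. The paper instead proves the forward direction directly: assuming $\eta_r=0$, take $y_i$ with $\phi_r(y_i)\to 0$ and a continuity point $y_0$ of the upper semicontinuous function $\phi_r$. Given $\varepsilon>0$ and a neighborhood $V_\varepsilon$ of $y_0$ on which $\phi_r$ varies by less than $\varepsilon$, minimality of $(X_{eq},S)$ gives $\bigcup_{j=0}^{\ell}S^{-j}V_\varepsilon=X_{eq}$, and the pigeonhole principle gives a single $t\in\{0,\dots,\ell\}$ with $S^t y_i\in V_\varepsilon$ for infinitely many $i$. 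Since only the finitely many maps $T^0,\dots,T^{\ell}$ are involved, uniform continuity yields $\phi_r(S^t y_i)\to 0$ along this subsequence (here one only propagates \emph{smallness} of $\phi_r$ by finitely many iterates, which needs no distality), hence $\phi_r(y_0)\leq\varepsilon$ for all $\varepsilon$, so $\phi_r(y_0)=0$ and $\#\pi_{eq}^{-1}(y_0)\leq r-1$. Now some fibre is finite, Proposition \ref{pro-0} applies, and $\pi_{eq}$ is almost $m$ to one with $m=r_{eq}(X,T)\leq r-1$. In particular, the paper never needs the implication you were missing: the finite fibre is \emph{produced} by the continuity-point argument rather than assumed, and the troublesome all-fibres-infinite case is handled automatically, since there $\eta_r=0$ is impossible. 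If you want to salvage your contrapositive formulation, replace the distal-tuple construction by exactly this continuity-point argument.
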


\begin{proof} We partly follow the arguments in the proof of Proposition 4.4 in \cite{Z}. If $\eta_{r}=0$ then there exist $\{y_i\}_{i=1}^\infty\subset X_{eq}$ such that $\lim_{i\rightarrow +\infty} \phi_r(y_i)=0$. Note that $\phi_r$  is an upper semi-continuous function from $X_{eq}$ to $[0,+\infty)$, we may take a continuous point $y_0\in X_{eq}$ of $\phi_r$.
For any fixed $\varepsilon >0 $, we choose an open neighborhood $V_{\varepsilon}$ of $y_0$ such that
$ |\phi_r(y)-\phi_r(y_0)|< \varepsilon$ for  $y\in V_{\varepsilon}$.
Since $(X,T)$ is minimal and $(X_{eq},S)$ is also minimal, there exists $\ell_k\in \mathbb{N}$ with $\bigcup_{j=0}^{\ell_k}S^{-j}V_{\varepsilon}= X_{eq}$. Then we can find
$t\in \{0,1,\cdots,\ell_k\}$ such that
$E_t^{\varepsilon}:=\{i\in \mathbb{N}:S^ty_i\in V_{\varepsilon}\}$ is an infinite set. Since $\pi_{eq}^{-1}(S^t y_i) = T^t \pi_{eq}^{-1}(y_i)$, so as $ i\in E_t^{\varepsilon}\rightarrow +\infty $
$$ \phi_r(S^ty_i)=\sup \{\min \limits_{1\leq h< j\leq r} d(T^tx_h,T^tx_j) : x_{1},x_{2},\cdots,x_{r} \in \pi^{-1}(y_i)\} \rightarrow 0 .$$
Then $|\phi_r(y_0)|\le \varepsilon$. This implies $\phi_r(y_0)=0 $ as $\varepsilon $ is arbitrary. Moreover we have $\#\ \pi_{eq}^{-1}(y_0)\leq r-1$ from the definition of $\phi_r$.
Let $m=r_{eq}(X,T)$. Then $m\in \{1,2,\cdots,r-1\}$. By Proposition \ref{pro-0}, $\pi_{eq}$ is almost $m$ to one extension. Finally the other hand is obviously true.
\end{proof}
  Let $(X,T)$ be a TDS with a $T$-invariant Borel probability measure $\mu$, and $n\ge 2$. Then
$(x_i)_1^n\in X^n$ is a {\it sensitive $n$-tuple} for $\mu$,
if $(x_i)_1^n$ is not on the diagonal $$\Delta_n(X)=\{ (x,...,x)\in X^{n}: x \in X \},$$ and for any
open neighborhood $U_i$ of $x_i$ and any $A\in \B_{X,\mu}^+$ there is
$k\ge 0$ such that $A\cap T^{-k}U_i\neq \emptyset$ for
$i=1,2,\cdots,n$.
Denote by $S_n^\mu(X,T)$ the set of all sensitive $n$-tuples for
$\mu$.

For $n\ge 2$
the $n$-{\it regionally proximal relation} is defined as
\begin{align*}
&Q_{n}(X,T)= \{ (x_i)_{i=1}^n\in X^n: \text{ for any $\epsilon>0$
there exist $x_1',\cdots,x_n'\in X$ and $m\in \mathbb{Z}_+$ }\\
&\text{ with } d(x_i,x_i')<\epsilon \text{ for all }1\le i\le n   \text{ and }
d(T^m x_i',T^m x_j')<\epsilon \text{ for all } 1\le i\le j\le n \}.
\end{align*}
For a minimal TDS $(X,T)$, $Q_{2}(X,T)$ is a closed invariant equivalence relation which induces the maximal equicontinuous factor of $(X,T)$.

We have the following proposition
for a minimal TDS.
\begin{prop}\label{xyz}\cite[Proposition 6.8 and Corollary 6.9]{HLY}
If $(X,T)$ is a minimal  TDS with a $T$-invariant Borel probability measure $\mu$ and $n\in \mathbb{N}$ with $n\geq 2 $,  then
 \begin{enumerate}
 \item if $(x_i ,x_{i+1} )\in Q(X,T)$  for $i=1,\cdots,n-1$, then $(x_1,\cdots,x_n)\in Q_{n}(X,T)$.

 \item $S_n^\mu(X,T)=Q_{n}(X,T)\setminus\Delta_n(X)$.
\end{enumerate}
\end{prop}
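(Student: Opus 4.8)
The plan is to deduce both assertions from the single structural fact that, for a minimal system, $Q_n(X,T)$ coincides with the ``common fibre'' relation $R_n:=\{(x_1,\dots,x_n)\in X^n:\pi_{eq}(x_1)=\cdots=\pi_{eq}(x_n)\}$. Since for a minimal TDS $Q(X,T)=Q_2(X,T)$ is a closed invariant equivalence relation whose quotient is $(X_{eq},S)$, we have $(x,y)\in Q(X,T)\iff\pi_{eq}(x)=\pi_{eq}(y)$; applying the defining condition of $Q_n(X,T)$ to each pair of coordinates immediately gives $Q_n(X,T)\subseteq R_n$. Assertion (1) is then exactly the reverse inclusion $R_n\subseteq Q_n(X,T)$, because by transitivity of $Q(X,T)$ the hypothesis $(x_i,x_{i+1})\in Q(X,T)$ for $i=1,\dots,n-1$ is equivalent to $\pi_{eq}(x_1)=\cdots=\pi_{eq}(x_n)$, i.e.\ to $(x_1,\dots,x_n)\in R_n$.

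To prove $R_n\subseteq Q_n(X,T)$ I would argue by induction on $n$, the case $n=2$ being the definition of $Q(X,T)$. The real issue is not perturbing the coordinates but \emph{synchronising} the single time $m$ that must serve all of them at once. I would handle this by showing that, for a fixed $\epsilon>0$, the set of times $m$ realising a given regional-proximality statement is large: because $(X_{eq},S)$ is equicontinuous, its return times to any neighbourhood of the identity form a syndetic (Bohr) set, and the freedom in the perturbation upgrades this to a thickly syndetic set of admissible times for each pairwise statement. Two thickly syndetic sets intersect, so the witness for $(x_1,\dots,x_{n-1})\in Q_{n-1}(X,T)$ and the witness for $(x_{n-1},x_n)\in Q(X,T)$ can be taken at a common time $m$; since the two families of perturbations agree in the equicontinuous factor and differ only inside the fibre over the common image, they patch together into the required $x_1',\dots,x_n'$, and closedness of the ``$\epsilon$-collapsed'' condition lets me pass to the limit as $\epsilon\to0$.

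For (2), the inclusion $S_n^\mu(X,T)\subseteq Q_n(X,T)\setminus\Delta_n(X)$ is the concrete half. Given a sensitive $n$-tuple $(x_1,\dots,x_n)$ and $\epsilon>0$, put $U_i=B(x_i,\epsilon)$ and, covering $X$ by finitely many balls of diameter $<\epsilon$, choose a set $A\in\B_{X,\mu}^+$ of diameter $<\epsilon$. The defining property of a sensitive tuple yields a common $k\ge0$ and points $a_i\in A$ with $T^k a_i\in U_i$; setting $x_i':=T^k a_i$ gives perturbations with $d(x_i,x_i')<\epsilon$ and $T^{-k}x_i'=a_i$, so all the $T^{-k}x_i'$ lie within $\epsilon$ of one another. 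This exhibits the tuple as regionally proximal of order $n$ at time $-k$; a routine reduction (for a minimal system the one-sided and two-sided regionally proximal relations of every order coincide, via syndetic recurrence in $X_{eq}$) replaces $-k$ by a nonnegative time, so the tuple lies in $Q_n(X,T)$, and it is off $\Delta_n(X)$ by hypothesis.

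The reverse inclusion $Q_n(X,T)\setminus\Delta_n(X)\subseteq S_n^\mu(X,T)$ is where the real work lies, and it is the step I expect to be the main obstacle. Here one must show that a single geometric collapse, available for suitable perturbations at one time $m$, can be reproduced \emph{starting from an arbitrary} $A\in\B_{X,\mu}^+$ at a \emph{common} time $k$. The plan is to use that for a minimal system every invariant measure has full support, so the small ball into which the perturbed points are driven has positive measure; then, combining invariance of $\mu$, Poincar\'e recurrence, and the alignment supplied by the $Q_n$-structure (all the relevant perturbed points share one image in the equicontinuous factor), one produces points of $A$ whose orbits meet every $U_i$ simultaneously. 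Controlling the fibre directions under the backward passage is the delicate point, and it is precisely here that the equicontinuity of $X_{eq}$ and the measure-theoretic hypotheses must be used together; this is the content of \cite[Proposition 6.8, Corollary 6.9]{HLY}, whose measure-theoretic analysis I would follow rather than attempt a purely topological shortcut.
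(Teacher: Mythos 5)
The paper itself offers \emph{no} proof of this proposition --- it is imported verbatim, with a citation, from \cite[Proposition 6.8 and Corollary 6.9]{HLY} --- so your proposal must stand on its own, and at its two load-bearing points it has genuine gaps. The critical one is your inductive ``synchronisation and patching'' argument for (1), i.e.\ for the inclusion of the common-fibre relation $R_n$ into $Q_n(X,T)$. Your claim that the admissible collapse times for a pairwise statement form a thickly syndetic set is unsubstantiated, and the inference offered for it is a non sequitur: equicontinuity of $(X_{eq},S)$ produces syndetic (Bohr) return times \emph{in the factor}, but says nothing about when points of $X$ can be perturbed so as to collapse under $T^m$, which is exactly what is at issue. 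Worse, even granting a common time $m$, the patching fails: the witness for $(x_1,\dots,x_{n-1})\in Q_{n-1}(X,T)$ perturbs $x_{n-1}$ to some point $u$, the witness for $(x_{n-1},x_n)\in Q(X,T)$ perturbs it to some other point $v$, and to merge the two families you need $d(T^m u,T^m v)$ small. Nothing provides this: $u$ and $v$ are merely two points $\epsilon$-close to $x_{n-1}$, and your justification --- that the two perturbation families ``agree in the equicontinuous factor and differ only inside the fibre'' --- is false, since the definitions of $Q_{n-1}$ and $Q$ only give closeness in $X$ and do not place $u$ and $v$ in the same $\pi_{eq}$-fibre; and even same-fibre points (e.g.\ a distal pair in a fibre, whose existence is precisely what $r_{eq}(X,T)$ measures in this paper) can stay a definite distance apart for \emph{all} time. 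In a sensitive minimal system nearby points typically separate, which is why this chain-implies-$Q_n$ statement is a real theorem; note also that such transitivity-type behaviour of $Q$ fails for non-minimal systems, so a correct proof must use minimality through heavier machinery (in \cite{HLY} this is done with structure/enveloping-semigroup arguments in the tradition of \cite{Au,Veech1968}), not through a common-time handshake.

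Part (2) fares somewhat better but still reduces to what is missing. Your proof of $S_n^\mu(X,T)\subseteq Q_n(X,T)\setminus\Delta_n(X)$ is correct as far as it goes, but it produces collapse at the \emph{negative} time $-k$, and the ``routine reduction'' to $m\in\Z_+$ is in substance an appeal to (1): from backward collapse and the $S$-invariant metric on $X_{eq}$ one deduces $\pi_{eq}(x_1)=\cdots=\pi_{eq}(x_n)$, and then (1) places the tuple in $Q_n(X,T)$ --- so this half is hostage to the broken step above. For the hard inclusion $Q_n(X,T)\setminus\Delta_n(X)\subseteq S_n^\mu(X,T)$ you openly defer to the ``measure-theoretic analysis'' of \cite{HLY} without supplying an argument. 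Deferring is in fact what the paper itself does for the entire proposition, so your instinct about where the difficulty lies is sound; but as a proof attempt your text establishes neither (1) nor the substantive half of (2), and the one new mechanism you do propose (thickly syndetic witness times plus fibre-patching) would fail. The honest options are to cite \cite{HLY} for the whole statement, as the paper does, or to reproduce its proofs rather than this sketch.
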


\begin{prop}\label{pro-2} Let $(X,T)$ be a minimal TDS, $r\in \mathbb{N}$ with $r\geq 2 $, and $ \pi_{eq} :(X,T)\rightarrow (X_{eq},S)$ be the maximal equicontinuous  factor of $(X,T)$. Then $\eta_{r}>0$ if and only if $(X,T)$ is $thickly \ r$-$sensitive $.
\end{prop}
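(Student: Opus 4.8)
The plan is to prove both implications, using throughout the reformulation that $n\in N_T(U,\delta;r)$ exactly when the open set $T^nU$ contains $r$ points that are pairwise more than $\delta$ apart; equivalently, writing $T^{(r)}=T\times\cdots\times T$ ($r$ factors) and $D_\delta=\{(\xi_i)\in X^r:\min_{i\ne j}d(\xi_i,\xi_j)>\delta\}$, one has $N_T(U,\delta;r)=\{n:T^{(r)n}(U^r)\cap D_\delta\ne\emptyset\}$. I fix the single scale $\delta=\eta_{r}/2$ once and for all. From $\eta_{r}>0$ I use that every fibre carries a separated $r$-tuple: by compactness of $(\pi_{eq}^{-1}(y))^r$ the supremum defining $\phi_r(y)$ is attained, so for each $y$ there are $z_1,\dots,z_r\in\pi_{eq}^{-1}(y)$ with $\min_{i<j}d(z_i,z_j)=\phi_r(y)\ge\eta_{r}$.

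For the easy implication (thickly $r$-sensitive $\Rightarrow\eta_{r}>0$) I argue the contrapositive. If $\eta_{r}=0$, then given any prescribed scale I pick $y_0$ with $\phi_r(y_0)$ below it; since $\pi_{eq}^{-1}(V)\downarrow\pi_{eq}^{-1}(y_0)$ as the neighbourhood $V\downarrow\{y_0\}$, the maximal $r$-separation $\sup\{\min_{i<j}d(x_i,x_j):x_i\in\pi_{eq}^{-1}(V)\}$ decreases to $\phi_r(y_0)$, so I may fix $V$ so small that $\pi_{eq}^{-1}(V)$ contains no $r$-tuple pairwise $\ge\delta$. Choose a small ball $U\ni x_0$ over $y_0$ with $\pi_{eq}(U)$ of tiny diameter. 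Because $(X_{eq},S)$ is a minimal isometry for a compatible metric $\rho$, the uniform return times $R=\{n:\rho(S^ny_0,y_0)<\text{small}\}$ are syndetic, and for $n\in R$ equicontinuity forces $\pi_{eq}(T^nU)=S^n\pi_{eq}(U)\subset V$, hence $T^nU\subset\pi_{eq}^{-1}(V)$ carries no $\delta$-separated $r$-tuple, i.e. $n\notin N_T(U,\delta;r)$. Thus the complement of $N_T(U,\delta;r)$ is syndetic, so this set is not thick; as $\delta$ was arbitrary, $(X,T)$ is not thickly $r$-sensitive.

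For the main implication ($\eta_{r}>0\Rightarrow$ thickly $r$-sensitive) the backbone is a contraction–transport construction. Since $z_1,\dots,z_r$ lie in one fibre of $\pi_{eq}$, all pairs are regionally proximal, so by Proposition \ref{xyz}(1) the tuple lies in $Q_r(X,T)$; as $\pi_{eq}$ is simultaneously the maximal equicontinuous factor of $(X,T^{-1})$ with the same fibres, it also lies in $Q_r(X,T^{-1})$. The definition of $Q_r(X,T^{-1})$ gives, for each $\varepsilon$, points $z_i'$ with $d(z_i,z_i')<\varepsilon$ and an $m\ge0$ with the cluster $\bar u:=(T^{-m}z_1',\dots,T^{-m}z_r')$ lying in a ball of radius $\varepsilon$; a compactness argument shows that, because the $z_i$ are distinct, the contraction time $m=m(\varepsilon)\to\infty$ as $\varepsilon\to0$ (a bounded $m$ would, in the limit, collapse the $z_i$). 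Using minimality of $(X,T)$ in the uniform form $X=\bigcup_{j=0}^{L}T^{-j}B(x_0,b)$ with $B(x_0,2b)\subset U$, the centre of the cluster returns into $B(x_0,b)$ in some bounded time $s\le L$; taking $\varepsilon$ small enough (relative to the common modulus of continuity of $T^0,\dots,T^L$, and so that $m(\varepsilon)\ge L$) the transported cluster $T^{(r)s}\bar u$ lies in $U^r$, while $T^{(r)(m-s)}(T^{(r)s}\bar u)=(z_1',\dots,z_r')$ is pairwise $\ge\eta_{r}-2\varepsilon>\delta$. Hence $m-s\ge0$ lies in $N_T(U,\delta;r)$, which already yields (positive-time) $r$-sensitivity.

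The hard part is upgrading this single membership to thickness, i.e. producing arbitrarily long blocks $\{n,n+1,\dots,n+k\}\subset N_T(U,\delta;r)$. The per-time witnesses may differ, so it suffices to find one base point $n$ for which each of $T^nU,\dots,T^{n+k}U$ contains a $\delta$-separated $r$-tuple; a single witness will not suffice in general, since a tuple separated at one instant need not stay separated (the infimum over fibres of the ``block-separation'' can vanish even when $\eta_{r}>0$). My plan is to synchronise the construction across the block by exploiting the isometric factor: the uniform return times $R_\gamma=\{n:\rho(S^ny,y)<\gamma\ \forall y\}$ form a syndetic set independent of the point, and I would combine this uniform recurrence with the unboundedness of the contraction times $m(\varepsilon)$ to show that the set of separating times for $U$ is thickly syndetic — equivalently, to establish multi-$r$-sensitivity $\bigcap_i N_T(U_i,\delta;r)\ne\emptyset$ and then invoke the standard implication $\bigcap_{j=0}^{k}N_T(T^jU,\delta;r)\ne\emptyset\Rightarrow\{n,\dots,n+k\}\subset N_T(U,\delta;r)$. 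Showing that the contraction times are rich enough (syndetic, and compatible with the bounded transport times) to line up $k+1$ consecutive separating instants is exactly the crux, and is where I expect the real work — via the uniform structure of the equicontinuous factor and, if needed, the regionally proximal machinery behind Propositions \ref{pro-0} and \ref{xyz} — to be concentrated.
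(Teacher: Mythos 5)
Your forward implication (thickly $r$-sensitive $\Rightarrow\eta_r>0$) is essentially sound, and in fact a little more direct than the paper's: you propagate $\phi_r(y_0)<\delta$ to a whole tube $\pi_{eq}^{-1}(V)$ by compactness of fibers and then use the invariant metric on the equicontinuous factor to make the complement of $N_T(U,\delta;r)$ syndetic, whereas the paper routes through Proposition \ref{pro-1} to get a finite fiber and a pigeonhole on $\delta/3$-balls; both arguments share the same skeleton (syndetic return times in the isometric factor trapping $T^nU$ in a region admitting no $\delta$-separated $r$-tuple).

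The backward implication, however, contains a genuine gap, which you flag yourself: the contraction--transport argument via $Q_r(X,T^{-1})$ produces only individual times in $N_T(U,\delta;r)$, i.e.\ plain $r$-sensitivity, and the upgrade to \emph{thickness} --- the entire content of the proposition --- is left as a plan (``where I expect the real work to be concentrated''). The missing idea is a synchronisation device you already cite but apply with the wrong parameter: Proposition \ref{xyz} with $n=Lr$ rather than $n=r$. Fix $y\in X_{eq}$ and, for each $0\le k\le L-1$, an $r$-tuple $x_1^k,\dots,x_r^k\in\pi_{eq}^{-1}(S^ky)$ with pairwise distances at least $\delta_0=\eta_r/2$; pull each back by $T^{-k}$, so that all $Lr$ points $z_i^k=T^{-k}x_i^k$ lie in the \emph{single} fiber $\pi_{eq}^{-1}(y)$ and are therefore pairwise regionally proximal. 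By Proposition \ref{xyz}(1) the off-diagonal tuple $(z_i^k)$ lies in $Q_{Lr}(X,T)$, and by Proposition \ref{xyz}(2) it is a sensitive $Lr$-tuple for any invariant measure $\mu$: for every $A\in\mathcal{B}_{X,\mu}^+$ (in particular any nonempty open $U$, since invariant measures of a minimal system have full support) there is a single common time $m$ with $T^mA\cap U_i^k\neq\emptyset$ for all $i,k$, where $U_i^k=T^{-k}W_i^k$ and $W_i^k$ is the $\delta$-ball about $x_i^k$ with $\delta=\delta_0/4$. Pushing forward $k$ steps then gives a $\delta$-separated $r$-tuple in $T^{m+k}A$ for every $0\le k\le L-1$, i.e.\ $\{m,\dots,m+L-1\}\subseteq N_T(A,\delta;r)$, with $\delta$ independent of $A$ and $L$ --- exactly thickness. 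Your instinct to reduce thickness to a common hitting time for the family $T^jU$ was the right one, but the characterization $S_n^\mu(X,T)=Q_n(X,T)\setminus\Delta_n(X)$ is precisely the tool that delivers that common time, and without invoking it (or an equivalent) at the level of $Lr$-tuples your sketch does not close; the excursion through $Q_r(X,T^{-1})$ and the growth of contraction times $m(\varepsilon)$ is ultimately unnecessary.
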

\begin{proof} Assume that $(X,T)$ is  thickly $r$-sensitive with a sensitive constant $\delta>0$. We are to show that $\eta_{r}>0$. If it is not true, there exists $ m\leq r-1 $ such that $ \pi_{eq} $ is almost $m$-to-one extension by Proposition \ref{pro-1}. Actually there exists $y_0\in X_{eq}$ such that $\#\ \pi_{eq}^{-1}(y_0)=m\leq r-1.$

Let $\pi^{-1}_{eq}(y_0)=\{x_1, \cdots, x_m\}$.
We take $$W=\{x\in X: d(x,x_i)<\frac{\delta}{3} \text{ for some }i\in \{1,2,\cdots,m\}\}.$$
Then $W$ is open and $\pi^{-1}_{eq}(y_0)\subseteq W$.
Thus there exists  open neighborhood $V$ of $y_0$ such that $\pi_{eq}^{-1}(V)\subseteq  W$.
Since $(X_{eq}, S)$ is equicontinuous, we can take a compatible metric $d_{eq}$ on $X_{eq}$ such that
$$d_{eq}(Sy_1,Sy_2)=d_{eq}(y_1,y_2)$$
for any $y_1,y_2\in X_{eq}$.

We choose  $\varepsilon>0$ such that  $\{ y\in X_{eq}:d_{eq}(y,y_0)<2 \varepsilon\}\subseteq V$. Let $$V_1:=\{ y\in X_{eq}:d_{eq}(y,y_0)<\varepsilon\}.$$
Let $$M=N_{S}(y_0, V_1):=\{n\in \mathbb{N}: S^n y_0\in V_1\}.$$
Then $M$ is  syndetic and for any $n\in M$, $S^n {y_0}\in V_1$ then implies $S^nV_1\subseteq V$.
Since $N_T(\pi_{eq}^{-1}(V_1),\delta;r)$ is thick, we can take $k\in M \cap N_T(\pi_{eq}^{-1}(V_1),\delta;r)$.

Then on one hand there exist $z_1, \cdots, z_r\in \pi_{eq}^{-1}(V_1)$
such that $d(T^kz_i,T^kz_j)>\delta$ for any $1\le i\neq j\le r$.
On the other hand as $k\in M$,
$\pi_{eq}(T^kz_i)=S^k\pi_{eq}(z_i)\in S^k(V_1)\subseteq V$
for $i=1,\cdots,r$. Thus $\{T^kz_1,\cdots,T^kx_r\}\subseteq W$.
Note that $r>m$, we can find $1\le a\neq b\le r$ and $i\in \{1,2,\cdots,m\}$ such that
$d(T^kz_a,x_i)<\frac{\delta}{3}$ and $d(T^kz_b,x_i)<\frac{\delta}{3}$.
This implies $d(T^kz_a,T^kz_b)<\frac{2\delta}{3}$, a contradiction.
This show that  $\eta_{r}>0$.

Conversely assume $\eta_r>0$ and let $\delta _0=\frac{\eta _{r}}{2}$, now we will show that $(X,T)$ is thickly $r$-sensitive with a sensitive constant $\delta=\frac{\delta_0}{4}$.
Actually we are to show that $(X,T)$ is thickly r-sensitive with the same sensitive constant $\delta$ for $\mu$, where $\mu$ is any given $T$-invariant Borel probability measure on $X$.
It is sufficient to show that for any set $A \in \mathcal{B}_{X,\mu}^+ $ and $L\in \mathbb{N}$, we can find $m\in \mathbb{Z}_+$ such that
$$\{ m,\cdots,m+L-1\} \subset N_T(A,\delta;r).$$

We fix  a point $y\in X_{eq}$. Since $\eta_{r}>0$, for each $0\le k\le L-1$ we can find $\{x_1^k,\cdots,x^k_r \}\subset{ \pi_{eq}^{-1}(S^ky)}$ such that $d(x^k_i,x^k_j)\geq \delta_0$ for any $1\leq i<j\leq r $.

For $0\leq k\leq L-1$ and $ 1\leq i\leq r$, let $$ W_i^k:= \{ x\in X:  d(x,x_i^k)<\delta \} $$
and
$$ z_i^k=T^{-k}x_i^k\text{ and }  U_i^k=T^{-k}W_i^k.$$
Then $z_i^k\in \pi_{eq}^{-1}(y)$ and $U_i^k$ is an open neighborhood of $z_i^k$. It is also clear that $z_i^k\neq z_j^k$ for any $0\leq k\leq L-1$ and $1\leq i< j\leq r$.

Note that $\{z_i^k:0\le k\le L-1,1\le i\le r\}\subseteq \pi^{-1}_{eq}(y)$. Hence $(z_i^p, z_j^q)\in Q(X,T)$ for any $0\leq p,q\leq L-1$ and $ 1\leq i, j\leq r$.
Thus since $(X,T)$ is minimal,
$$(z_i^k)_{0\le k\le L-1,1\le i\le r}\in Q_{Lr}(X,T)\setminus \triangle_{Lr} (X)=S^\mu_{Lr}(X,T)$$
by Proposition \ref{xyz}. Then we can find $ m\in \mathbb{Z}_+ $ such that $ T^m A\cap U_{i}^k\neq \emptyset$ by the definition of $S^\mu_{Lr}(X,T)$.

This implies we can find $\omega _i^k\in A $ such that $T^m\omega _i^k \in U_i^k$ and so $T^{m+k}\omega _i^k \in W_i^k$ for any  $0\leq k\leq L-1$ and $ 1\leq i\leq r$.  We know
$$d(T^{m+k}\omega _i^k,T^{m+k}\omega _j^k)\ge dist(W_i^k,W_j^k) >\delta$$
for any $0\leq k\leq L-1$ and $1\leq i< j\leq r $ from the construction of $W_i^k$.

Thus $\{m, m+1,\cdots, m+L-1 \}\subset  N_T(A,\delta;r)$.  So $(X,T)$ is thickly r-sensitive with the sensitive constant $\delta$ for $\mu$.
Since $(X,T)$ is minimal, every non-empty open subset of $X$ belongs to $\mathcal{B}_{X,\mu}^+$. Thus $(X,T)$ is also thickly $r$-sensitive.
This finish the proof of Proposition \ref{pro-2}.
\end{proof}

\begin{proof}[Proof of Theorem \ref{thm-1}] As a direct corollary of Proposition \ref{pro-1} and Proposition \ref{pro-2} we can get Theorem \ref{thm-1}.
\end{proof}

\section{Proof of Theorem \ref{thm-2}}
In this section we are to prove Theorem \ref{thm-2}. For that we need some notation and Propositions. For a TDS $(X,T)$, $\delta>0$ and $r\in \mathbb{N}$ with $r\geq 2$, and a non-empty subset $U$ of $X$, recall that $ N_T(U,\delta;r)$ is defined in Introduction.
We can also describe $ N_T(U,\delta;r)=\{n\in \mathbb{N}:diam_r (T^nU)>\delta \}$, where the $r$-version diameter $diam_r(\cdot)$ is defined as follows: for any non-empty subset $ B\subset X$,
\begin{equation}\label{hd0}
diam_r(B)=\sup\{\min \limits_{1\leq i\neq j\leq r} d(x_i,x_j): x_1,x_2,\cdots,x_r \in B \}.
\end{equation}

For a TDS $(X,T)$, let $2^X$ be the set of all non-empty closed subsets of $X$. Recall Hausdorff metric $H_d$ on $2^X$ was defined as :
\begin{equation}\label{hd}
H_d(A,B)=\max\{\max_{x\in A}d(x,B),\max_{y\in B}d(A,y)\}
\end{equation}
for any $A,B\in 2^X$.

\begin{prop} \label{Con-d} Let $(X,T)$ be a TDS and $r\in \mathbb{N}$ with $r\geq 2$, $diam_r(\cdot)$ and $H_d$ are defined as above \eqref{hd0} and \eqref{hd}. Then $diam_r(\cdot)$ is a continuous function on $(2^X,H_d)$, (i.e., if $\lim_{n\rightarrow \infty} A_n= B$ with respect to the Hausdorff metric $H_d$, then $diam_r(B)=\lim_{n\rightarrow\infty} diam_r(A_n)$).
\end{prop}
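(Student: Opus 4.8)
The plan is to prove a stronger quantitative statement, namely that $diam_r$ is $2$-Lipschitz with respect to $H_d$,
$$|diam_r(A)-diam_r(B)|\le 2\,H_d(A,B)\qquad\text{for all }A,B\in 2^X,$$
from which the asserted continuity (and hence the description of the limit $diam_r(B)=\lim_{n\to\infty}diam_r(A_n)$) follows at once.

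First I would record the compactness facts that make everything well posed. Since $X$ is compact and every element of $2^X$ is closed, each such set is compact; consequently the map $(x_1,\dots,x_r)\mapsto\min_{1\le i\neq j\le r}d(x_i,x_j)$ attains its maximum on the compact set $A^r$, so the supremum defining $diam_r(A)$ is realized by a genuine $r$-tuple. Likewise, for each point $y$ the distance $d(A,y)=\inf_{x\in A}d(x,y)$ is attained at some point of $A$, and directly from the definition of $H_d$ one has $\max_{y\in B}d(A,y)\le H_d(A,B)$.

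The core step is then an elementary triangle-inequality estimate. Fix $A,B\in 2^X$ and set $\varepsilon=H_d(A,B)$. Choose $y_1,\dots,y_r\in B$ with $\min_{i\neq j}d(y_i,y_j)=diam_r(B)$, and for each $i$ pick $x_i\in A$ with $d(x_i,y_i)=d(A,y_i)\le\varepsilon$. Then for every pair $i\neq j$,
$$d(x_i,x_j)\ge d(y_i,y_j)-d(x_i,y_i)-d(x_j,y_j)\ge d(y_i,y_j)-2\varepsilon\ge diam_r(B)-2\varepsilon,$$
so taking the minimum over pairs and then the supremum over $r$-tuples in $A$ gives $diam_r(A)\ge diam_r(B)-2\varepsilon$. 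Interchanging the roles of $A$ and $B$ yields the reverse inequality, and together these give the $2$-Lipschitz bound. Applying it with $A=A_n$ and using $H_d(A_n,B)\to 0$ produces $diam_r(A_n)\to diam_r(B)$, as required.

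I do not expect a serious obstacle here: the argument is routine, and the only points requiring care are the attainment of the various extrema (all guaranteed by compactness of $X$) and correctly tracking the factor $2$, which arises because each of the two endpoints of a pair may be displaced by as much as $\varepsilon$. Should one prefer to avoid invoking attainment, the identical estimate goes through verbatim using $\varepsilon'$-approximate maximizers, with a harmless additive slack that is afterwards let tend to $0$.
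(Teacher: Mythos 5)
Your proof is correct and rests on the same core estimate as the paper's: transporting an $r$-tuple between $H_d$-close sets via the triangle inequality, losing $2\varepsilon$ because both endpoints of each pair move. The only difference is packaging — the paper argues along the sequence, proving $\limsup_{n}diam_r(A_n)\le diam_r(B)$ and $\liminf_{n}diam_r(A_n)\ge diam_r(B)$ separately with approximate maximizers, whereas you extract the cleaner (and slightly stronger) $2$-Lipschitz bound $|diam_r(A)-diam_r(B)|\le 2H_d(A,B)$, using compactness to attain the extrema; both are complete.
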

\begin{proof} Let $\{A_n\}_{n=1}^\infty\subset 2^X$ and $B\in 2^X$ such that $\lim_{n\rightarrow \infty} A_n= B$ with respect to the Hausdorff metric $H_d$.
For simplicity we write $diam_r(B)=R$. Then for any $\epsilon >0$ there exists $N=N(\epsilon) \in \mathbb{N}$ such that $H_d(A_n,B)<\epsilon$ when $n\ge N$.
When $n\ge N$ for any $a^n_1,\cdots,a^n_r \in A_n$, there are $b^n_1,\cdots,b_r^n \in B$ such that $\max_{1\le i\le r}d(a^n_i,b^n_i)<\epsilon$  according to the definition of $H_d$. So we have
\begin{align*}
\min_{1\le i\neq j\le r} d(a^n_i,a^n_j)&\leq \min_{1\le i\neq j\le r} \left( d(a^n_i,b^n_i)+d(b^n_i,b^n_j)+d(b^n_j,a^n_j)\right)\\
&<2\epsilon+\min_{1\le i\neq j\le r} d(b^n_i,b^n_j).
\end{align*}
This implies $diam_r(A_n)<2\epsilon +R$ when $n\ge N$ according to the definition of $diam_r(\cdot)$. So $\limsup_{n\rightarrow\infty} diam_r(A_n)\leq 2\varepsilon +R $. Let $\epsilon\searrow 0$ we have $\limsup_{n\rightarrow\infty} diam_r(A_n)\leq R$.

Conversely, suppose $\liminf_{n\rightarrow\infty}diam_r(A_n)=R'$. For any $\epsilon>0$ there exists $n\in \mathbb{N}$ such that $diam_r(A_n)<R'+\epsilon$ and $H_d(A_n,B)<\epsilon$. Similar to the above analysis we know $diam_r(B)\leq R'+2\varepsilon$, that implies $diam_r(B)\leq R'$ when $\varepsilon\rightarrow0$. So we get $diam_r(B)=\lim_{n\rightarrow\infty} diam_r(A_n)$.
This finishes the proof of Proposition \ref{Con-d}.
\end{proof}

Resembling the discussion in Proposition 3.5 of \cite{Y}, we have the following Proposition which is the measure-theoretic corresponding of \cite[Theorem 3.4]{Z}.

\begin{prop}\label{pro-3} Let $(X,T)$ be a TDS with a $T$-invariant Borel probability measure $\mu$. The following statements are equivalent:
\begin{enumerate}
  \item[(1)] $(X,T)$ is multi-$r$-sensitive for $\mu$.
  \item[(2)] $(X,T)$ is thickly $r$-sensitive for $\mu$.
  \item[(3)] $(X,T)$ is thickly syndetically  $r$-sensitive for $\mu$.
\end{enumerate}
\end{prop}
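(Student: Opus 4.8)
The plan is to prove the chain of implications $(3)\Rightarrow(2)\Rightarrow(1)\Rightarrow(3)$, since two of these arrows are essentially definitional and only one requires real work. First I would dispose of $(3)\Rightarrow(2)$: a thickly syndetic set is in particular thick, so if $N_T(U,\delta;r)$ is thickly syndetic for every $U\in\mathcal{B}_{X,\mu}^+$ with some fixed $\delta$, then it is thick for every such $U$ with the same $\delta$, giving thick $r$-sensitivity for $\mu$ immediately. Next, for $(2)\Rightarrow(1)$, suppose $(X,T)$ is thickly $r$-sensitive for $\mu$ with constant $\delta$. Given finitely many sets $U_1,\dots,U_k\in\mathcal{B}_{X,\mu}^+$, I want to find a single $n$ lying in $\bigcap_{i=1}^k N_T(U_i,\delta;r)$. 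The idea is that each $N_T(U_i,\delta;r)$ is thick, and a finite intersection of thick sets need not be thick, so I cannot argue purely combinatorially. Instead I would exploit invariance of $\mu$ to combine the sets: since $\mu$ is $T$-invariant, each $T^{-j}U_i$ again lies in $\mathcal{B}_{X,\mu}^+$, and thickness of $N_T(U_i,\delta;r)$ for every positive-measure set lets me realize arbitrarily long runs of the $r$-sensitivity witness simultaneously for all the $U_i$ by a pigeonhole/shift argument on long blocks.

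The genuinely substantive step is $(1)\Rightarrow(3)$, and this is where I expect the main obstacle to lie. Here the goal is to upgrade the mere nonemptiness of finite intersections (multi-$r$-sensitivity) to the strong combinatorial conclusion that each $N_T(U,\delta;r)$ is thickly syndetic for $\mu$. The natural route is to translate the dynamical condition into a statement about the continuous function $diam_r$ on the hyperspace $(2^X,H_d)$, using Proposition~\ref{Con-d}. Specifically, for a fixed positive-measure set $U$ I would consider, for each length $L$, the collection of starting positions $m$ such that the whole block $\{m,m+1,\dots,m+L-1\}$ lies in $N_T(U,\delta';r)$ for a slightly smaller constant $\delta'$; thick syndeticity is exactly the assertion that this set of good starting positions is syndetic for every $L$. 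To produce syndeticity I would argue by contradiction: if the good positions were not syndetic, there would be arbitrarily long gaps, and along such gaps I could extract, via compactness of $2^X$ and continuity of $diam_r$, a limit closed set on which $diam_r$ is at most $\delta'$, contradicting multi-$r$-sensitivity applied to a suitably chosen finite family of translates of $U$.

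The delicate point in that contradiction argument is the bookkeeping between the open-set formulation of multi-$r$-sensitivity (nonempty intersection of the $N_T(U_i,\delta;r)$) and the measure-theoretic formulation (the $U_i\in\mathcal{B}_{X,\mu}^+$), together with the passage from a single constant $\delta$ to the slightly shrunk constant needed so that the limit step does not lose the separation. I would handle the constant loss by fixing $\delta'<\delta$ at the outset and using that $diam_r$ is uniformly continuous on $2^X$, so that sets $H_d$-close to a set of $diam_r$ exceeding $\delta$ still have $diam_r$ exceeding $\delta'$; this is precisely what Proposition~\ref{Con-d} supplies. The measure-theoretic bookkeeping is then managed by noting that $T$-invariance of $\mu$ keeps all the translated sets in $\mathcal{B}_{X,\mu}^+$, so the hypothesis of multi-$r$-sensitivity for $\mu$ applies to every finite family of translates that arises. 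Assembling these pieces yields that every $N_T(U,\delta';r)$ has syndetic blocks of every length, i.e.\ is thickly syndetic, completing the cycle and hence the proof.
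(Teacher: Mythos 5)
Your cycle $(3)\Rightarrow(2)\Rightarrow(1)\Rightarrow(3)$ has a genuine gap at the arrow $(2)\Rightarrow(1)$, and it is not a repairable detail: the ``pigeonhole/shift argument on long blocks'' has nothing to bite on. Thickness is invariant under translation, so applying the hypothesis to the shifted sets $T^{j}U_i$ (which merely translates $N_T(U_i,\delta;r)$, since $N_T(T^{j}U_i,\delta;r)=\left(N_T(U_i,\delta;r)-j\right)\cap\mathbb{N}$) yields no new information, and thickness gives no control whatsoever on \emph{where} the long blocks of $N_T(U_i,\delta;r)$ sit: two thick subsets of $\mathbb{N}$ can be disjoint (alternating long runs), and if the $U_i$ are pairwise disjoint of positive measure there is no auxiliary positive-measure set $V$ with $N_T(V,\delta;r)\subset\bigcap_i N_T(U_i,\delta;r)$, since that would force $V\subset\bigcap_i U_i$. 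The only available mechanism for intersecting these sets is to first upgrade ``thick'' to ``thickly syndetic,'' i.e.\ exactly the hyperspace-compactness argument — but you have placed that argument at $(1)\Rightarrow(3)$, where it cannot help you close $(2)\Rightarrow(1)$. The paper runs the cycle the other way, which is the correct economy: $(1)\Rightarrow(2)$ is the easy translate argument (apply multi-$r$-sensitivity to the family $T^{i}A$, $0\le i\le L$, which stay in $\mathcal{B}_{X,\mu}^+$ by invariance; a common time $n_L$ gives the block $\{n_L,\dots,n_L+L\}\subset N_T(A,\delta;r)$); $(2)\Rightarrow(3)$ carries all the weight via the limit-set argument; and $(3)\Rightarrow(1)$ is the closure of thickly syndetic sets under finite intersection. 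Note that you invoke translate-plus-invariance reasoning in your $(2)\Rightarrow(1)$ discussion, but that reasoning proves the \emph{reverse} implication.

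Your $(1)\Rightarrow(3)$ sketch is essentially the paper's $(1)\Rightarrow(2)\Rightarrow(3)$ compressed, and is sound in outline, but two points need repair. First, the contradiction at the end must come from applying the sensitivity hypothesis to the Hausdorff-limit set $B$ itself, not to ``a finite family of translates of $U$'': multi-$r$-sensitivity applied to translates of $U$ only reproduces thickness of $N_T(U,\delta;r)$ and can never yield syndeticity of block positions. For this you must verify $B\in\mathcal{B}_{X,\mu}^+$, which is where one takes $A$ closed (by regularity of $\mu$) and uses $\mu(B)\ge\limsup_{d\to\infty}\mu(T^{n_d^1}A)=\mu(A)>0$; you also need the bounded-gap bookkeeping (pigeonhole on gap patterns to fix $\{a_i\}$ with $a_{i+1}-a_i\le p$, and a diagonal extraction so that $T^{n_d^i}A\to T^{a_i}B$ for every $i$ simultaneously), so that $\{a_i\}$ is syndetic and, by Proposition \ref{Con-d}, $diam_r(T^{a_i}B)\le\delta$, i.e.\ $\{a_i\}\cap N_T(B,\delta;r)=\emptyset$, contradicting thickness of $N_T(B,\delta;r)$. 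Second, the shrinkage from $\delta$ to $\delta'<\delta$ is unnecessary: Proposition \ref{Con-d} gives exact continuity of $diam_r$ on $(2^X,H_d)$, and the contradiction only needs the non-strict bound $diam_r(T^{a_i}B)\le\delta$, so no constant is lost in the limit.
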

\begin{proof} (1)$\Rightarrow $(2) Suppose $(X,T)$ is multi-$r$-sensitive for $\mu$ with a sensitive constant $\delta>0$. For any $A\in \mathcal{B}_{X,\mu}^+ $ and any $L\in \mathbb{N}$, one has $T^{i}A\in \mathcal{B}_{X,\mu}^+ $ for $i=0,1,\cdots,L$ since $\mu$ is $T$-invariant. From the definition of multi-$r$-sensitive for $\mu$, there exists $n_L\in \mathbb{N}$ such that $n_L\in \bigcap_{i=0}^L N_T(T^{i}A,\delta; r)$. Thus $\{n_L,n_L+1,\cdots,n_L+L\}\subset N_T(A,\delta; r)$, which implies $(X,T)$ is thickly $r$-sensitive for $\mu$.

\medskip
(2)$\Rightarrow $(3) Suppose $(X,T)$ is thickly $r$-sensitive for $\mu$ with a sensitive constant $\delta>0$, then we claim $(X,T)$ is thickly syndetically $r$-sensitive for $\mu$ with the same sensitive constant. If not, then there exists $A\in \mathcal{B}_{X,\mu}^+$  such that
$N_T(A,\delta;r)$ is not thickly syndetic. Without loss of generality, we may assume that $A$ is a closed subset of $X$.

Similar to the analysis in the proof of Proposition 3.5 (2)$\Rightarrow $(3) in \cite{Y}, we can find $p\in \mathbb{N}$,
$$\bigcup_{d=1}^\infty \{ n_d^1,n_d^2,\cdots,n_d^d \}\subset \mathbb{N}\setminus N_T(A,\delta;r), $$
$\{ a_i\}_{i\in \mathbb{N}}\subset \mathbb{Z}_+$ and $B\in 2^X$
 such that $a_1=0$, $1\le n_d^{i+1}-n_d^{i}=a_{i+1}-a_{i}\leq p$ for any $d,i \in \mathbb{N}$ with $d\ge i+1$,
 and $\lim_{d\rightarrow \infty} T^{n_d^i}A= T^{a_i} B$ with respect to the Hausdorff metric $H_d$ for each $i\in \mathbb{N}$.
It's clear that
$$\mu(B)\ge \limsup_{d\rightarrow \infty} \mu(T^{n_d^1}A)=\mu(A)>0$$ and so $B\in \mathcal{B}_{X,\mu}^+ $.

Now on one hand, $N_T(B,\delta; r)$ is  thick since $(X,T)$ is thickly $r$-sensitive for $\mu$ with a sensitive constant $\delta>0$. On the other hand, by Proposition \ref{Con-d}
$$diam_r(T^{a_i}B)=\lim_{d\rightarrow \infty} diam_r(T^{n_d^i}A)\leq \delta$$
for any $i\in \mathbb{N}$. This implies  $\{ a_i\}_{i\in \mathbb{N}} \cap N_T(B,\delta; r)= \emptyset$.
Hence $N_T(B,\delta; r)$ is not thick since $\{a_i\}_{i=2}^\infty$ is syndetic. This is a contradiction.

\medskip
(3)$\Rightarrow $(1) It's abvious because the intersection of finitely many thickly syndetic sets is still a thickly syndetic set.
\end{proof}

\begin{proof}[Proof of Theorem \ref{thm-2}] We have proved the equivalence of 1), 2) and 3) in Proposition \ref{pro-3}. Suppose 4) is established, we have $\eta_{r}>0$ according to Proposition \ref{pro-1}, so $(X,T)$ is thickly $r$-sensitive for $\mu$ according to the proof of Proposition \ref{pro-2}. On the other hand, every non-empty open set $U$ belongs to $\mathcal{B}_{X,\mu}^+$ since $(X,T)$ is minimal. Thus if $(X,T)$ is thickly $r$-sensitive for $\mu$ then $(X,T)$ is thickly $r$-sensitive. This leads to 4) by Theorem \ref{thm-1} (see also Proposition \ref{pro-1} and Proposition \ref{pro-2}). Therefore the equivalence of those conditions are proved.
\end{proof}

\bibliographystyle{amsplain}

\end{document}